\documentclass[12pt,twoside]{amsart}
\usepackage{geometry}
\usepackage{amsmath,amssymb,amsthm,amscd}
\usepackage{mathtools}
\usepackage{mathrsfs}
\usepackage[shortlabels]{enumitem}
\usepackage{chngcntr}

\usepackage{graphicx}
\usepackage{tabularray}
\usepackage{float}
\usepackage{hhline}
\usepackage{tikz}
\usepackage{tikz-cd}

\usepackage{xcolor}

\usepackage{hyperref}

\setlist[enumerate]{leftmargin=56pt,labelsep=8pt,itemsep=4pt,label=\upshape{(\theequation.\arabic*)}}

\makeatletter
\renewcommand{\theequation}{%
\thesection.\arabic{equation}}
\@addtoreset{equation}{section}
\makeatother

\newtheorem{thm}{Theorem}[section]

\newtheorem{cor}[thm]{Corollary}

\theoremstyle{definition}

\newtheorem{rem}[thm]{Remark}

\newtheorem{fact}[thm]{Fact}
\newtheorem{step}{Step}
\newtheorem*{ack}{Acknowledgments}

\title{Non-projective complete log canonical surfaces}
\author{Osamu Fujino, Nao Moriyama, Hiroshi Sato} 
\address{Department of Mathematics, Graduate School of Science, Kyoto University, Kyoto 606-8502, Japan}
\email{fujino@math.kyoto-u.ac.jp}
\address{Department of Mathematics, Graduate School of Science, Kyoto University, Kyoto 606-8502, Japan}
\email{moriyama.nao.22s@st.kyoto-u.ac.jp}
\address{Department of Applied Mathematics, Faculty of Sciences, Fukuoka University, 8-19-1, Nanakuma, Jonan-ku, Fukuoka 814-0180, Japan
}
\email{hirosato@fukuoka-u.ac.jp}
\keywords{non-projective complete surfaces, log canonical singularities, minimal model program}
\subjclass[2020]{Primary 14J17; Secondary 14E30}
\date{2026/6/3, version 0.02}
\begin{document}

\begin{abstract}
We construct non-projective complete log canonical algebraic surfaces whose canonical divisors are semi-ample over an algebraically closed field of any characteristic other than the algebraic closure of a finite field. We provide a unified framework to construct such surfaces for any given non-negative Kodaira dimension, namely, zero, one, or two. Furthermore, we show that any complete log canonical algebraic surface with Kodaira dimension minus infinity is automatically projective. This projectivity result confirms that our construction covers all possible values for the Kodaira dimension of non-projective complete log canonical surfaces.
\end{abstract}

\maketitle

\section{Introduction}
In \cite{nagata}, Nagata constructed the first example of a non-projective complete normal algebraic surface, which is a landmark result in the development of algebraic geometry. The surface he constructed has very bad singularities from the viewpoint of the minimal model theory. In this paper, we revisit the existence problem of non-projective complete normal algebraic surfaces from the perspective of the minimal model theory.

Let $k$ be an algebraically closed field and let $X$ be a complete normal algebraic surface defined over $k$. 
It is easy to see that $X$ is always projective if $X$ is $\mathbb{Q}$-factorial. 
Since $X$ is known to be $\mathbb{Q}$-factorial when it has only rational singularities, $X$ is automatically projective in this case as well. 
Consequently, if $X$ has only log terminal singularities, it is always projective. 
Furthermore, it is a classical result of Artin that $X$ is always $\mathbb{Q}$-factorial when $k=\overline{\mathbb{F}}_p$, which implies that $X$ is always projective in this setting. 
On the other hand, it is well known that $X$ is not necessarily $\mathbb{Q}$-factorial if we allow $X$ to have log canonical singularities. 
Therefore, it is natural to ask whether there exists a non-projective complete log canonical algebraic surface. 
Although this problem has already been discussed in \cite[Section 12]{fujino-nagoya}, here we would like to take a step further and establish the following theorem.

\begin{thm}\label{thm1.1}
Let $k$ be an algebraically closed field such that $k \ne \overline{\mathbb{F}}_p$. 
Then there exists a non-projective complete log canonical algebraic surface $X$ 
defined over $k$ such that $K_X$ is semi-ample. 
Furthermore, the Kodaira dimension $\kappa(X, K_X)$ can be chosen to be any value in $\{0, 1, 2\}$. 
\end{thm}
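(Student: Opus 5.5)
The plan is to contract a configuration of curves with simple normal crossing support on a smooth projective surface. The contraction should produce a log canonical point (a simple elliptic or cusp singularity), and the gluing data should be chosen so that the result cannot be projective. The standard mechanism is as follows. Take a smooth projective surface $Y$ containing two disjoint smooth elliptic curves $C_1, C_2$ (or cycles of rational curves) with negative self-intersection. By Grauert/Artin-type criteria, each $C_i$ contracts in the category of algebraic spaces to a simple elliptic singularity. Such a point is log canonical and not $\mathbb{Q}$-factorial in general: a Weil divisor through it is $\mathbb{Q}$-Cartier only if a certain class in $\mathrm{Pic}^0(C_i)$ is torsion. If $k\neq\overline{\mathbb{F}}_p$, then $\mathrm{Pic}^0$ of an elliptic curve contains non-torsion points, and this is exactly where the hypothesis on $k$ enters.

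The concrete construction runs in four steps.

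\emph{Step 1.} Start from an elliptic surface or product-type surface. For example, take $E\times \mathbb{P}^1$ with $E$ an elliptic curve, or a suitable blow-up of it. Blow up points on two fibres $E\times\{0\}$ and $E\times\{\infty\}$ so that the strict transforms $C_1, C_2$ have negative self-intersection. Choose the blown-up points so that the normal bundle classes $\mathcal{O}_{C_i}(C_i)$ differ in a controlled way. The choice is made so that a curve $\Gamma$ (a section $\{p\}\times\mathbb{P}^1$, or an exceptional curve) meets $C_1$ and $C_2$ at points whose difference, measured against the normal bundles, is non-torsion in $\mathrm{Pic}^0(E)$.

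\emph{Step 2.} Contract $C_1\cup C_2$ to get a complete normal surface $X$ with two simple elliptic singularities. $X$ is log canonical, and since $K_Y+C_1+C_2$ is the pullback of $K_X$, adjunction and canonical bundle formula computations give the semi-ampleness of $K_X$.

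\emph{Step 3.} Prove non-projectivity. Suppose $H$ is an ample Cartier divisor on $X$. Pull it back to $Y$; then $f^*H\cdot C_i=0$, so $f^*H|_{C_i}$ is trivial in $\mathrm{Pic}(C_i)$. Write $f^*H$ in $\mathrm{Pic}(Y)$, which is known explicitly (fibre, section, exceptional curves, plus $\mathrm{Pic}^0(E)$ pulled back). Restricting to $C_1$ and $C_2$ gives two conditions. Combined with positivity ($f^*H\cdot \Gamma>0$ for the curves $\Gamma$ not contracted), these conditions force a nontrivial integer multiple of a non-torsion point of $E$ to vanish, which is a contradiction. This Picard-group bookkeeping is the main obstacle: $Y$ has to be chosen with enough rigidity that the restriction conditions really force a contradiction.

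\emph{Step 4.} Vary the Kodaira dimension.
\begin{itemize}
\item For $\kappa=0$, arrange $K_Y+C_1+C_2\sim 0$; the $E\times\mathbb{P}^1$ model with both fibres does this before blow-ups. To keep this after blowing up, replace smooth elliptic fibres by cycles of rational curves on a rational surface, i.e. cusp singularities, with toric-type boundary. The non-torsion condition then lives in $\mathbb{G}_m$, where $k\neq\overline{\mathbb{F}}_p$ again supplies non-torsion elements.
\item For $\kappa=1$, take $Y$ an elliptic fibration over a curve of higher genus, or a product $E\times B$ with $g(B)\ge 2$. Then $K_X$ is the pullback of an ample divisor on $B$, so it is semi-ample of Iitaka dimension one.
\item For $\kappa=2$, take a product $C\times B$ of curves with $g(C),g(B)\ge 2$ and contract elliptic or rational cycle configurations introduced by blowing up. Alternatively, take a cyclic cover of the $\kappa=0,1$ examples branched along a general ample divisor avoiding the singular points. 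The latter preserves non-projectivity, since a finite cover of a projective surface's complement argument fails and projectivity descends along finite surjections. It also makes $K_X$ big and semi-ample via Hurwitz.
\end{itemize}

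Semi-ampleness in each case should follow from abundance for log canonical surfaces, applied on $Y$ to $K_Y+C_1+C_2$, which is nef by construction.
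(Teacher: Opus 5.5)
Your mechanism for $E\times\mathbb{P}^1$ is the paper's: blow up points on $E\times\{0\}$ and $E\times\{\infty\}$, contract the strict transforms $C_1,C_2$, and turn an ample $H$ into a relation $n(c_1-c_2)\sim 0$ with $n>0$. The $\kappa=2$ case, however, has a genuine gap.

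\begin{itemize}
\item You branch a cyclic cover of the $\kappa=0,1$ examples along a general ample divisor. Those surfaces are non-projective, so they have no ample divisor.
\item What is needed is a Cartier divisor on $X$ that is big and semi-ample, divisible by $m$, and has a smooth member avoiding $\operatorname{Sing}X$. In effect, this is a birational morphism from $X$ onto a projective surface.
\item Producing it is the main extra work in the paper. It blows up a third point $(\infty,c_1)$. It then uses the basepoint-free theorem for non-klt pairs to show that $B_1+3C_1$ is big and semi-ample and contracts exactly $C_0$, $C_\infty$, $E_{\infty,c_2}$. This gives $\phi\colon Z\to S$ with $S$ projective, and the cover is branched along $\phi^*A'$ for a general $A'\in|mA|$.
\item Your other $\kappa=2$ option fails outright. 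On any blow-up of $C\times B$ with $g(C),g(B)\ge 2$, every curve of geometric genus at most $1$ is exceptional, hence smooth rational, and these curves form trees. So there are no simple elliptic or cusp configurations to contract. Contracting only rational singularities gives a $\mathbb{Q}$-factorial, hence projective, surface.
\end{itemize}

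There are further problems in the other cases.

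\begin{itemize}
\item \emph{The $\kappa=0$ plan rests on a miscalculation.} Blowing up points of the two fibres preserves $K_Y+C_1+C_2=f^*(K_{E\times\mathbb{P}^1}+E\times\{0\}+E\times\{\infty\})\sim 0$. This is exactly how the paper gets $K_Z\sim 0$, so the contracted surface itself is the $\kappa=0$ example.
\item \emph{Your proposed replacement for $\kappa=0$ cannot work.} If $Y$ is rational and $\kappa(X,K_X)=0$, the Leray sequence gives $\operatorname{length}R^1f_*\mathcal{O}_Y=h^0(X,\mathcal{O}_X(K_X))\le 1$. So $X$ has at most one non-rational point, and Goodman's argument from the proof of Theorem~\ref{thm1.2} then makes $X$ projective.
\item \emph{Step 2 only produces an algebraic space.} The Artin/Grauert contraction need not be a scheme when elliptic curves are contracted. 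A non-scheme is trivially non-projective and does not prove the theorem. The paper obtains a scheme by contracting $C_0$ and $C_\infty$ separately over the affine curves $\mathbb{P}^1\setminus\{\infty\}$ and $\mathbb{P}^1\setminus\{0\}$, via relative abundance, and then gluing.
\item \emph{Step 3 fails for $E\times B$ with $g(B)\ge 1$ as written.} $\operatorname{Pic}$ also contains graphs of morphisms $B\to E$, so restrictions of a line bundle to two fibres need not be linearly equivalent. Step 3 must additionally ensure no nonzero multiple of $c_1-c_2$ lies in the resulting finitely generated subgroup of $E(k)$. The paper avoids this by getting $\kappa=1$ from a cyclic cover of $Z$ branched along general fibres of $Z\to\mathbb{P}^1$.
\end{itemize}
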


The assertion of Theorem~\ref{thm1.1} is sharp when combined with the following theorem.

\begin{thm}\label{thm1.2}
Let $X$ be a complete log canonical algebraic surface over an algebraically closed field.
If $\kappa(X,K_X)=-\infty$, then $X$ is projective.
\end{thm}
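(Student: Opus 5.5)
We argue by running the minimal model program for log canonical surfaces and exploiting that a Mori fiber space outcome forces projectivity. The starting point is the MMP for log canonical surfaces (as in \cite{fujino-nagoya}), which works for complete, possibly non-$\mathbb{Q}$-factorial, log canonical surfaces over any algebraically closed field. Since $\kappa(X,K_X)=-\infty$, the program cannot end with a nef $K$: by log abundance for lc surfaces, a nef $K$ would be semi-ample, giving $\kappa\ge 0$. So we obtain a sequence of extremal contractions $X=X_0\to X_1\to\cdots\to X_m$ with each $X_i$ complete and log canonical, ending in a Mori fiber space $X_m\to Z$ with $\dim Z\in\{0,1\}$.

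The first step is to show that $X_m$ is projective. On $X_m$ we have a $K_{X_m}$-negative extremal ray $R$, and the contraction $\varphi_R$ comes with a line bundle that is relatively ample; more precisely, the cone and contraction theorem gives a Cartier divisor $L$ with $L$ nef, $L^\perp\cap\overline{NE}=R$, $\varphi_R$ defined by $|mL|$. If $\dim Z=0$, then $-K_{X_m}$ is positive on all of $\overline{NE}(X_m)=R$ and Cartier up to multiple (an lc surface has $K$ $\mathbb{Q}$-Cartier by definition), so $-K_{X_m}$ is ample by Kleiman's criterion for complete surfaces, and hence $X_m$ is projective. If $\dim Z=1$, then $Z$ is a complete curve, hence projective, and $-K_{X_m}$ is $\varphi$-ample, so $-K_{X_m}+a\varphi^*H$ is ample for $H$ ample on $Z$ and $a\gg 0$. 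In both cases the real point is that Kleiman's criterion (or Nakai--Moishezon for $\mathbb{Q}$-Cartier divisors) applies on a complete normal surface with a given $\mathbb{Q}$-Cartier divisor; this is standard.

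The second step is descending projectivity back along the divisorial contractions $f\colon X_{i}\to X_{i+1}$, each contracting a single irreducible curve $E$ with $K_{X_i}\cdot E<0$. Given ample $A$ on $X_{i+1}$, we consider $f^*A-\varepsilon K_{X_i}$: it is $\mathbb{Q}$-Cartier, positive on $E$, and for small $\varepsilon$ positive on every other curve class in $\overline{NE}(X_i)$ because the cone is spanned by $R=\mathbb{R}_{\ge0}[E]$ and the part where $f^*A$ is strictly positive. More cleanly, we use that $-K_{X_i}$ is $f$-ample and $X_{i+1}$ is projective, so $X_i$ is projective over a projective variety, giving projectivity of $X_i$. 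Inducting down gives $X=X_0$ projective.

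The main obstacle I expect is justifying that the lc surface MMP produces contractions with $-K$ relatively ample in the non-$\mathbb{Q}$-factorial setting, and that the target $X_{i+1}$ stays log canonical and complete (and $\mathbb{Q}$-Cartier $K$ is preserved). I would rely directly on Fujino's MMP for lc surfaces in \cite{fujino-nagoya}, which establishes exactly these properties (contraction theorem with $\mathbb{Q}$-Cartier $K$ on targets, in all characteristics). A subtlety is that in positive characteristic abundance must also be invoked for the first step; if needed one can avoid it by noting that the MMP terminating with nef $K$ would give $\kappa\ge0$ via the abundance theorem for lc surfaces proven there, which is characteristic-free.
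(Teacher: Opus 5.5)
There is a genuine gap at the foundation. You run a log canonical MMP (cone theorem, extremal contractions, ampleness via Kleiman) on $X$, which is only assumed \emph{complete}, and no such theory exists over an arbitrary algebraically closed field. The lc surface MMP of \cite{fujino-surface} and \cite{tanaka}, as well as Theorems~\ref{thm: abundance} and~\ref{thm: basepoint-free}, are stated for projective morphisms. Moreover, \cite{fujino-nagoya} is not characteristic-free: it concerns compact complex analytic surfaces in Fujiki's class $\mathcal{C}$. Your outline follows the minimal-model argument used for the analogous \cite[Theorem~1.3]{fujino-nagoya}, and the paper explicitly notes that this argument does not transfer, because an adequate MMP for two-dimensional algebraic spaces is not available.

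Each tool you invoke presupposes projectivity. The supporting nef Cartier divisor $L$ of an extremal ray is manufactured from an ample divisor. Contracting a curve on a non-projective surface a priori gives only an algebraic space (Artin), so keeping the $X_i$ schemes needs proof. And Kleiman's criterion fails for complete non-projective surfaces. For example, redo the computation of Step~\ref{step: Z is non-proj} of Section~3 with an arbitrary line bundle $\mathcal{L}$ on $Z$; note that $h^*\mathcal{L}$ is trivial near $C_0$ and $C_\infty$. This gives $n_3=\deg(h^*\mathcal{L}|_{E_{\infty,c_2}})=0$, so $h(E_{\infty,c_2})$ is an irreducible curve with zero class in $N_1(Z)$. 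Meanwhile $\phi^*A$ is positive on every other curve, yet it is not ample.

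This is exactly what can go wrong in your Fano case. Positivity of $-K_{X_m}$ on $R\setminus\{0\}$ says nothing about curves with zero class, whereas Nakai--Moishezon needs $-K_{X_m}\cdot C>0$ for every curve $C$. The real obstruction is non-projectivity, not non-$\mathbb{Q}$-factoriality as such.

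The paper sidesteps all of this by never running an MMP on $X$. It works on the minimal resolution $f\colon Y\to X$, which is smooth and projective. The hypothesis $\kappa(X,K_X)=-\infty$ gives $H^0(X,\mathcal{O}_X(K_X))=0$, hence $H^2(X,\mathcal{O}_X)=0$ by Serre duality. The Leray spectral sequence then yields the exact sequence $0\to H^1(X,\mathcal{O}_X)\to H^1(Y,\mathcal{O}_Y)\to H^0(X,R^1f_*\mathcal{O}_Y)\to 0$. If $Y$ is rational, all singularities are rational. Otherwise $Y$ is birationally ruled over a curve of genus $\ge 1$. Since a non-rational lc singularity is simple elliptic or a cusp, its exceptional locus must dominate the base curve. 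This forces $h^1(\mathcal{O}_Y)=1$, and hence there is at most one non-rational singular point.

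Goodman's argument then applies. Take an affine open $U$ containing that point. Every component of $X\setminus U$ lies in the locus of smooth points and rational singularities, which are $\mathbb{Q}$-factorial, so it is $\mathbb{Q}$-Cartier. One then builds an ample effective Cartier divisor supported on $X\setminus U$. The idea your proposal is missing is this combination: cohomological control of the non-$\mathbb{Q}$-factorial locus on the projective resolution, plus a projectivity criterion that needs no cone or contraction theorem on $X$.
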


This work is carried out within the framework of the minimal model theory.
Note that this paper and \cite{Mor26} provide some insight into the scope and limitations of the minimal model theory for log surfaces established in \cite{fujino-surface} and \cite{tanaka}.

\begin{ack} The first author was partially supported by JSPS KAKENHI Grant Number JP23K20787. The second author was partially supported by JSPS KAKENHI Grant Number JP26KJ1528. The third author was partially supported by JSPS KAKENHI Grant Number JP24K06679. 
The authors would like to acknowledge the assistance of Rethlas and
ChatGPT in the exploratory phase of this work. Their suggestions led
the authors to consider examples that ultimately motivated the results
presented here. All proofs and mathematical verifications were carried
out independently by the authors.
\end{ack}

\section{Preliminaries}
In the remainder of this paper, all surfaces are assumed to be algebraic varieties; we do not need to consider algebraic spaces. In this section, we collect some basic results. For the details on the definitions of singularities of pairs, see \cite[Section 2.4]{fujino-surface}.

The following fact is used to construct an example verifying Theorem~\ref{thm1.1} for $k\neq\overline{\mathbb{F}}_p$.
\begin{fact}[{cf.~\cite[Fact 2.3]{tanaka}}]\label{fact: non-torsion}
Let $k$ be an algebraically closed field of arbitrary characteristic and let $C$ be an elliptic curve over $k$. If $k\neq\overline{\mathbb{F}}_p$, then $\operatorname{Pic}^0(C)$ has a non-torsion element.
\end{fact}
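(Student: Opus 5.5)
The plan is to reduce to finite generation of $\operatorname{Pic}^0(C)$: if every element of $\operatorname{Pic}^0(C)$ is torsion, then $k=\overline{\mathbb{F}}_p$. Fix an origin $O\in C$, so that $\operatorname{Pic}^0(C)\cong C(k)$ as groups via $P\mapsto \mathcal{O}_C(P-O)$. It therefore suffices to produce a point of $C(k)$ of infinite order whenever $k\neq\overline{\mathbb{F}}_p$. There are two cases: either $k$ contains an element transcendental over the prime field, or $k$ is algebraic over $\mathbb{Q}$ (characteristic $0$, $k=\overline{\mathbb{Q}}$).

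\emph{Case 1: $k$ has characteristic $p>0$ and $k\neq\overline{\mathbb{F}}_p$.} Then $k$ is not algebraic over $\mathbb{F}_p$. Choose a finitely generated field $K_0\subset k$ over which $C$ and $O$ are defined. If every point of $C(k)$ were torsion, then $C(k)=C(\overline{K_0})\subset C(k)$. Note that $k$ contains fields of arbitrarily large transcendence degree if $\operatorname{trdeg}(k)$ is infinite, and in general a transcendental $t\notin \overline{K_0}$ once $K_0$ is chosen minimal. I would argue as follows.

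First, the torsion points of $C$ are all defined over $\overline{K_0}$, since $C[n]$ is a finite étale (or finite) subscheme defined over $K_0$. So it suffices to find a point of $C(k)\setminus C(\overline{K_0})$. Take $t\in k$ transcendental over $\overline{K_0}$; such $t$ exists because $\overline{K_0}\subsetneq k$. This holds since $K_0$ is finitely generated and $k\ne\overline{\mathbb{F}}_p$. Indeed, if $\operatorname{trdeg}k\ge1$ and $\operatorname{trdeg}K_0$ equals $\operatorname{trdeg}k<\infty$, first enlarge $k$ unnecessarily? No: in that case $\overline{K_0}=k$ is possible. So I refine the argument: handle $C$ isotrivial versus non-isotrivial.

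\emph{Subcase 1a: $j(C)\in\overline{\mathbb{F}}_p$.} Then $C$ is defined over a finite field $\mathbb{F}_q$. Take $t\in k$ transcendental over $\mathbb{F}_p$ and a point $P=(t,s)$ on a Weierstrass model over $\mathbb{F}_q$. $P$ is not defined over $\overline{\mathbb{F}}_p$, while all torsion points are. Hence $P$ has infinite order.

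\emph{Subcase 1b: $j(C)$ is transcendental over $\mathbb{F}_p$.} Then $C$ is defined over the function field $K=\mathbb{F}_p(j)$, and $C/K$ is non-isotrivial. By the Lang–Néron (Mordell–Weil) theorem, $C(L)$ is finitely generated for every finitely generated extension $L$ of $K$. Hence $C(L)_{\rm tors}$ is finite, and in fact uniformly bounded over finite extensions of bounded degree. The key step, and the main obstacle, is to exhibit a point of infinite order. My plan is to use that a non-isotrivial elliptic curve over a global function field acquires arbitrarily large rank over finite extensions. Alternatively, and more elementarily, take the points with $x\in\mathbb{F}_p(j)$ of degree-$2$ extensions. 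There are infinitely many such $K(\sqrt{\cdot})$-points, but by Lang–Néron their torsion orders are bounded. So only finitely many torsion orders $n$ occur, and $C[n]$ is finite; the infinitely many points cannot all be torsion.

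\emph{Case 2: $k=\overline{\mathbb{Q}}$.} Run the same argument over a number field $K$: quadratic points $(x,\sqrt{f(x)})$ with $x\in K$ are infinite in number. By Merel's theorem, or simply the finiteness of torsion over extensions of bounded degree via reduction modulo two primes, the torsion over all quadratic extensions of $K$ is bounded. Hence some such point has infinite order. The same counting also settles Subcase 1b and the case where $k$ has characteristic $0$ and is transcendental over $\mathbb{Q}$ (via the specialization or isotrivial trick as in 1a).

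I expect the bounded-torsion-over-quadratic-extensions step to be the main obstacle. I would justify it by noting that a point of order $n$ over a quadratic extension gives a point of $C[n]$ with Galois orbit of size $\le2$, and then applying injectivity of prime-to-$\ell$ torsion under good reduction.
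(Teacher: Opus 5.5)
The paper gives no proof of this statement; it quotes it from \cite[Fact 2.3]{tanaka}. Your self-contained route works for $k=\overline{\mathbb{Q}}$, since two primes of good reduction with distinct residue characteristics do bound all torsion over quadratic extensions. It also works for $j(C)\in\overline{\mathbb{F}}_p$, via the $x$-coordinate trick. However, Subcase 1b has a genuine gap, for two reasons.

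\begin{enumerate}
\item[(i)] Lang--N\'eron gives finite generation of $C(L)$ for each fixed $L$, hence finiteness of each $C(L)_{\mathrm{tors}}$. It says nothing uniform over the infinitely many different quadratic fields $K(P_x)$, so ``by Lang--N\'eron their torsion orders are bounded'' does not follow.
\item[(ii)] Your proposed justification, injectivity of torsion prime to the residue characteristic under good reduction, only bounds the prime-to-$p$ part. Every place of $\mathbb{F}_p(j)$ has residue characteristic $p$, so the two-prime trick of Case 2 is unavailable, and the kernel of reduction (the formal group) may contain $p$-power torsion.
\end{enumerate}
As written, nothing rules out that all the $P_x$ are $p$-power torsion of unbounded order.

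The missing piece can be supplied as follows. Since $j\notin\overline{\mathbb{F}}_p$, $C$ is ordinary, so $C[p^n](\overline{K})$ is cyclic of order $p^n$ and is generated by any point of exact order $p^n$. Suppose points of exact order $p^n$ and degree $\le 2$ over $K$ existed for every $n$. Then each field $K_n=K(C[p^n](\overline{K}))$ would have degree $\le 2$ over $K$. These fields form an increasing chain, so they stabilize at some $L$ with $[L:K]\le 2$. Then $C(L)\supset C[p^\infty](\overline{K})$ is infinite, contradicting Mordell--Weil over the global function field $L$. Combined with the prime-to-$p$ bound from a single place of good reduction, this gives the uniform bound you need. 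Alternatively, use N\'eron--Tate heights and Northcott over a global function field.

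Two further points need attention. The characteristic-$0$ case with $j(C)$ transcendental is only gestured at: the isotrivial trick does not apply there, and you must spell out the specialization. Concretely, reduce at a rational place of $\mathbb{Q}(j)$ where $C$ has good reduction. In residue characteristic $0$ the kernel of reduction is torsion-free, so torsion over quadratic extensions of $\mathbb{Q}(j)$ embeds into torsion of the specialized curve over number fields of degree $\le 2$, which your Case 2 argument bounds. Finally, remove the abandoned claim in Case 1 that some $t\in k$ transcendental over $\overline{K_0}$ always exists; it fails when $k=\overline{K_0}$.
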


In this paper, to construct contraction morphisms, we rely on two tools available for log canonical surfaces: the relative abundance theorem and the basepoint-free theorem.
\begin{thm}[{Relative abundance for log canonical log surfaces}]\label{thm: abundance}
Let $k$ be an algebraically closed field of arbitrary characteristic. Let $T$ be a normal surface over $k$ and $\Delta$ be a boundary $\mathbb{Q}$-divisor on $T$ such that $K_T+\Delta$ is $\mathbb{Q}$-Cartier.
Let $\pi : T \to Y$ be a projective surjective morphism onto a variety $Y$. 
Assume that $(T,\Delta)$ is log canonical. 
Further, assume that $K_T + \Delta$ is $\pi$-nef. 
Then $K_T + \Delta$ is $\pi$-semi-ample.
\end{thm}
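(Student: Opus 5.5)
Relative abundance is known from the minimal model theory for log surfaces, in \cite{fujino-surface} for characteristic zero and \cite{tanaka} in general. The plan is to follow that route: reduce to the case where $Y$ is a point or $\pi$ is birational or a fibration onto a curve, and then treat each case by the standard steps of the log surface MMP.

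\textbf{Reduction.} First we reduce to $\pi$ having connected fibers by Stein factorization: if $\pi = g\circ \pi'$ with $g$ finite, then $\pi$-semi-ampleness follows from $\pi'$-semi-ampleness, since a finite morphism preserves relative base point freeness. The question is local on $Y$, so we may assume $Y$ is affine. We split into cases according to $\dim Y\in\{0,1,2\}$.

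\textbf{Case $\dim Y=2$.} Here $\pi$ is birational and projective. Only finitely many curves $E_i$ are contracted, and $K_T+\Delta$ is $\pi$-nef.
\begin{itemize}
\item If $(K_T+\Delta)\cdot E_i>0$ for all $i$, then $K_T+\Delta$ is $\pi$-ample by the relative Nakai criterion, which holds for projective morphisms.
\item In general, apply the relative basepoint-free theorem for log canonical surfaces to $K_T+\Delta$ twisted by a small $\pi$-ample perturbation. Alternatively, compute directly on the minimal resolution: a nef divisor that is numerically trivial on some exceptional curves is $\pi$-semi-ample. This uses Artin's contractibility results, since the relevant curves form negative definite configurations with rational-type singularities, or are log canonical centers handled via adjunction.
\end{itemize}

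\textbf{Cases $\dim Y=1$ and $\dim Y=0$.} For $\dim Y=1$, the general fiber $F$ is a curve with $\deg(K_T+\Delta)|_F\ge0$. If the degree is positive, we get relative bigness, and the basepoint-free theorem gives semi-ampleness. If it is zero, we are in the abundance for log canonical curves on fibers, which is again a canonical bundle formula argument. For $\dim Y=0$, this is absolute abundance for projective log canonical surfaces. We run the case analysis on $\kappa$, $\nu$:
\begin{itemize}
\item $\nu=2$: basepoint-free theorem.
\item $\nu=1$: construct the Iitaka-type fibration via Riemann--Roch and adjunction to the lc centers.
\item $\nu=0$: show that $K_T+\Delta$ is torsion by passing to the minimal resolution and using the classification of surfaces, including the non-normal-crossing boundary components.
\end{itemize}

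\textbf{Main obstacle.} The main obstacle is the $\nu=0$ and $\nu=1$ cases in positive characteristic, where Kawamata--Viehweg vanishing fails. Here I would follow Tanaka's approach: pass to the minimal resolution, replace $(T,\Delta)$ by a dlt-type model, and use the fact that semi-ampleness can be checked on the reduced boundary via adjunction and Keel's theorem in characteristic $p$. In practice I would cite \cite[Theorem 1.1 and its relative version]{tanaka} and \cite{fujino-surface}, since the statement is exactly their relative abundance result.
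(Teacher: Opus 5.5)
Your final move, citing the known relative abundance theorem, is the same as the paper's proof, which consists only of references. The references you name, however, do not cover the statement for an arbitrary algebraically closed field. As the paper's proof makes explicit, \cite[Theorem 7.2]{fujino-surface} is used for $k=\mathbb{C}$ and \cite[Theorem 6.9]{tanaka} for $\operatorname{char} k>0$. That leaves out algebraically closed fields of characteristic zero other than $\mathbb{C}$, such as $\overline{\mathbb{Q}}$, or fields of cardinality larger than that of $\mathbb{C}$, which do not even embed into $\mathbb{C}$. These cases matter here, because Theorem~\ref{thm: abundance} is applied over every $k\neq\overline{\mathbb{F}}_p$. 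The paper covers them with \cite[Theorem 4.1]{Fuj21}. Without that reference you would have to spell out a Lefschetz-principle argument: descend $T,\Delta,\pi,Y$ to a finitely generated field, pass to $\mathbb{C}$, and check that log canonicity, $\pi$-nefness and $\pi$-semi-ampleness are all preserved and reflected under extension of algebraically closed base fields.

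The case analysis before the citation should not be offered as a proof, and one of its steps is false as stated. A $\pi$-nef divisor that is numerically trivial on the contracted curves need not be $\pi$-semi-ample once the configuration is non-rational. Take an elliptic curve $E$ with $E^2<0$ contracted by $\pi$, and a line bundle $L$ restricting to a non-torsion element of $\operatorname{Pic}^0(E)$. Then $L$ is $\pi$-numerically trivial, but no power of $L$ is $\pi$-generated, since that would force $L^{\otimes m}|_E\cong\mathcal{O}_E$. This is exactly the mechanism behind the non-projectivity of $Z$ in Section 3.

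The same issue affects each place where you invoke the basepoint-free theorem: for $\dim Y=2$, for $\dim Y=1$ with positive fiber degree, and for $\nu=2$. In each you must verify the hypothesis of Theorem~\ref{thm: basepoint-free} that the restriction to $\mathrm{Nklt}(T,\Delta)$ is semi-ample, and you never do. What distinguishes $K_T+\Delta$ from an arbitrary nef divisor is adjunction. On a component $C$ of $\lfloor\Delta\rfloor$ one has $(K_T+\Delta)|_C\sim_{\mathbb{Q}}K_C+\mathrm{Diff}_C(\Delta-C)$, and a numerically trivial divisor of this form on a (possibly nodal) curve is torsion. You mention this only in passing, and the cases $\nu=0,1$ are named but not argued. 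Since the cited theorems carry all the weight, it is cleaner to drop the sketch and give the complete three-part citation.
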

\begin{proof}
The case where $k=\mathbb{C}$ is given by \cite[Theorem 7.2]{fujino-surface}, while the positive characteristic case is covered by \cite[Theorem 6.9]{tanaka}. For the remaining cases in characteristic zero, one can refer to \cite[Theorem 4.1]{Fuj21}.
\end{proof}
\begin{thm}[{Basepoint-freeness for non-klt log surfaces, see \cite[Theorem 3.1]{Mor26}}]\label{thm: basepoint-free}
Let $T$ be a projective normal surface over an algebraically closed field $k$ of arbitrary characteristic and let $\Delta$ be an effective $\mathbb{Q}$-divisor such that $K_T + \Delta$ is $\mathbb{Q}$-Cartier. 
Let $D$ be a nef Cartier divisor.
If $\mathrm{char}(k)>0$, we further assume that $D$ is not numerically trivial. 
Suppose that $aD - (K_T + \Delta)$ is nef and big for some $a \in \mathbb{Z}_{>0}$, and that the restriction $D|_{\mathrm{Nklt}(T, \Delta)}$ is semi-ample, where $\mathrm{Nklt}(T, \Delta)$ denotes the non-klt locus of the pair $(T, \Delta)$.
Then $D$ is semi-ample.
\end{thm}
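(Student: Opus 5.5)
The plan is to separate the non-klt locus from the klt part: first lift sections from $\mathrm{Nklt}(T,\Delta)$ to $T$ by a vanishing theorem, and then treat the rest of $T$ as a klt surface. The characteristic-zero and positive-characteristic cases are handled separately, because the vanishing input differs. Throughout, put $W=\mathrm{Nklt}(T,\Delta)$ with its natural (reduced or non-lc) scheme structure, and let $\mathcal{I}_W$ be the corresponding multiplier-type ideal sheaf.

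\emph{Characteristic zero.} For every $m\ge a$, the divisor
\begin{equation*}
mD-(K_T+\Delta)=(m-a)D+\bigl(aD-(K_T+\Delta)\bigr)
\end{equation*}
is nef and big. Using the quasi-log structure on $[T,K_T+\Delta]$, I would apply the vanishing theorem of Nadel/Fujino type to get $H^1(T,\mathcal{I}_W\otimes\mathcal{O}_T(mD))=0$. The exact sequence
\begin{equation*}
0\to\mathcal{I}_W\otimes\mathcal{O}_T(mD)\to\mathcal{O}_T(mD)\to\mathcal{O}_W(mD)\to 0
\end{equation*}
then shows that $H^0(T,mD)\to H^0(W,mD|_W)$ is surjective. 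Since $D|_W$ is semi-ample, for suitable divisible $m$ the base locus $\mathrm{Bs}|mD|$ is disjoint from $W$. Away from $W$ the pair is klt, so the standard Kawamata--Shokurov argument applies: non-vanishing, followed by repeated tie-breaking to create new lc centers inside $\mathrm{Bs}|mD|$, again controlled by the same vanishing. Equivalently, one can invoke the cone and contraction theorem for log surfaces in \cite{fujino-surface}. Either route gives that $|mD|$ and $|(m+1)D|$ are free for $m\gg0$.

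\emph{Positive characteristic.} Kawamata--Viehweg vanishing fails here, so I would use Keel's theorem: a nef line bundle on a projective surface over a field of characteristic $p>0$ is semi-ample if and only if its restriction to the exceptional locus $\mathbb{E}(D)$ is semi-ample. There are two cases.
\begin{itemize}
\item[(i)] If $D$ is big, then $\mathbb{E}(D)$ is the finite union of curves $C$ with $D\cdot C=0$. For such a $C$ not contained in $W$,
\begin{equation*}
(K_T+\Delta)\cdot C=-\bigl(aD-(K_T+\Delta)\bigr)\cdot C\le 0 .
\end{equation*}
Adjunction on the minimal resolution, together with the negativity of the intersection matrix of $\mathbb{E}(D)$, should show that each connected component of $\mathbb{E}(D)$ not meeting $W$ is a tree of smooth rational curves. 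A degree-zero line bundle on such a tree is trivial. On components meeting $W$, I would glue the given semi-ampleness on $W$ to triviality on the rational trees, using Keel's gluing for $\overline{\mathbb{F}}_p$-type issues or, more simply, the fact that $D$ is numerically trivial on those curves.
\item[(ii)] If $D^2=0$ and $D\not\equiv 0$, then $\mathbb{E}(D)=T$ and Keel gives nothing. Instead I would run the $(K_T+\Delta)$-MMP from \cite{tanaka}, which is $D$-trivial because $aD-(K_T+\Delta)$ is nef and big, and use Tanaka's abundance-type results (cf.\ Theorem~\ref{thm: abundance}) to produce a fibration onto a curve along which $D$ is a pullback.
\end{itemize}

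I expect case (ii) to be the main obstacle, that is, $D$ nef but not big in positive characteristic. Here one must show that $D$ is numerically, and then linearly, a multiple of a fiber class, using only the nef-and-bigness of $aD-(K_T+\Delta)$ and the semi-ampleness on $W$. My first attempt would be to show that $D$ is effective: Riemann--Roch gives $\chi(mD)$ growing linearly, because $-D\cdot(K_T+\Delta)>0$ is forced by $D\cdot(aD-(K_T+\Delta))>0$. Then I would argue that the moving part of $|mD|$ defines the fibration, with any fixed components lying in $W$ handled by the hypothesis on $D|_W$.
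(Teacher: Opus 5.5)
\paragraph{The gap: case (i) in positive characteristic.} The problem is on a connected component $Z$ of $\mathbb{E}(D)$ that meets $W$. Since $D$ is numerically trivial on $Z$, Keel's criterion needs $D|_Z$ to be \emph{torsion}. That does not follow from two facts together: $D|_{Z\cap W}$ is torsion (semi-ample and numerically trivial), and $D$ is trivial on the rational curves of $Z$ not contained in $W$.

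Suppose those curves close up a cycle with $W$, for example a rational curve $C\not\subset W$ meeting $W$ in two points, or a chain of such curves joining two points of $W$. Then the Mayer--Vietoris sequence for $\mathcal{O}^*$ shows that the kernel of $\operatorname{Pic}(Z)\to\operatorname{Pic}(Z\cap W)\times\prod_i\operatorname{Pic}(C_i)$ contains a copy of $k^*$, which has non-torsion elements as soon as $k\neq\overline{\mathbb{F}}_p$. So ``$D$ is numerically trivial on those curves'' is exactly what does not suffice; numerically trivial but non-torsion classes are precisely the obstruction exploited in Section~3 of this paper. Keel's gluing of semi-ampleness is special to $\overline{\mathbb{F}}_p$, while the theorem is needed precisely for other $k$.

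The missing idea is an argument excluding such cycles. Let $Y$ be a connected component of the union of the curves of $\mathbb{E}(D)$ not contained in $W$. For every $C\subset Y$ you have $(K_T+\Delta)\cdot C\le aD\cdot C=0$, so $-(K_T+\Delta)$ is nef over the contraction of the negative definite configuration $Y$ (to an algebraic space, or formally). The connectedness lemma for birational morphisms of surfaces rests only on relative Kawamata--Viehweg vanishing, which holds in every characteristic, so it shows $Y\cap W$ is connected, i.e.\ at most one point. You also need $\operatorname{Pic}^0(Y)$ to have no torus part. Curve-by-curve adjunction does not see cycles, so this requires a negativity argument on the minimal resolution over the whole configuration; your ``tree of smooth rational curves'' claim for components avoiding $W$ needs the same input. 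Only with both facts does Mayer--Vietoris give that $D|_Z$ is torsion, the remaining unipotent contributions being $p$-torsion.

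\paragraph{Case (ii).} Your main plan does not work as stated. Nefness of $aD-(K_T+\Delta)$ only gives $(K_T+\Delta)\cdot R\le aD\cdot R$, so $(K_T+\Delta)$-negative rays need not be $D$-trivial. Also, abundance concerns $K_T+\Delta$, not $D$, and needs $(T,\Delta)$ lc, which is not assumed. However, your Riemann--Roch fallback already settles (ii) in every characteristic, so (ii) is not the real obstacle:
\begin{itemize}
\item By the Hodge index theorem $D\cdot(aD-(K_T+\Delta))>0$, hence $-D\cdot K_T>0$. Riemann--Roch, with $h^2(mD)=0$ for $m\gg0$, then gives $\kappa(D)=1$.
\item Write $mD=M+F$ with $M$ mobile. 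Nefness and $D^2=0$ force $D\cdot M=D\cdot F=M^2=M\cdot F=0$, so $|M|$ is free and defines a fibration $\phi\colon T\to B$.
\item $F$ is vertical with $F^2=0$, hence a $\mathbb{Q}$-combination of fibers by Zariski's lemma. Therefore $D\sim_{\mathbb{Q}}\phi^*A$ with $\deg A>0$.
\end{itemize}
The fixed components need not lie in $W$, and $W$ plays no role in this case.

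\paragraph{Characteristic zero.} Nadel vanishing lets you lift sections from the subscheme cut out by $\mathcal{J}(T,\Delta)$, which is non-reduced along components of $\Delta$ with coefficient $\ge 2$. If the hypothesis is semi-ampleness on the reduced non-klt locus, you still have to pass to this thickening. Torsion does not lift automatically there, because the kernel of $\operatorname{Pic}(W)\to\operatorname{Pic}(W_{\mathrm{red}})$ is a $k$-vector space, hence torsion-free in characteristic zero. That step needs its own argument, or the hypothesis must be read scheme-theoretically.

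The paper itself gives no proof of this statement and only cites \cite[Theorem~3.1]{Mor26}, so the proposal is judged here on its own terms.
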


\section{Proof of Theorem 1.1}
We will work over an algebraically closed field $k$. Suppose that $k \neq \overline{\mathbb{F}}_p$.
The goal of this section is to construct non-projective complete log canonical algebraic surfaces over $k$ with Kodaira dimension $0$, $1$ and $2$.
We start with the case $\kappa = 2$, which is the main highlight of this section. 

\subsection{\texorpdfstring{$\kappa=2$}{kappa=2}}
The following construction is based on the example given in \cite[Section 4.1]{Sch99}.
Start with $\mathbb{P}^1 \times C$, where $C$ is an elliptic curve. Let $p_1\colon \mathbb{P}^1\times C\to \mathbb{P}^1$ and $p_2\colon \mathbb{P}^1\times C\to C$ be the first and second projections, respectively.
We take two distinct points $c_1, c_2\in C$.
Let $f \colon T \to \mathbb{P}^1 \times C$ be the blowup at $(0, c_1)$, $(\infty, c_2)$, and $(\infty, c_1)$. Let $E_{0, c_1}$, $E_{\infty, c_2}$, and $E_{\infty, c_1}$ be the corresponding exceptional divisors.
Let $B_1, B_2, C_0, C_1, C_\infty \subset T$ be the strict transforms of $H_1 \coloneqq \mathbb{P}^1 \times \{c_1\}$, $H_2 \coloneqq \mathbb{P}^1 \times \{c_2\}$, $F_0 \coloneqq \{0\} \times C$, $F_1 \coloneqq \{1\} \times C$, and $F_\infty \coloneqq \{\infty\} \times C$, respectively.

\begin{step}
We show that $B_1 + 3C_1$ is a big and semi-ample divisor on $T$, which induces a birational contraction morphism $g \colon T \to S$ onto a projective surface $S$ that contracts only $C_0$, $C_{\infty}$, and $E_{\infty,c_2}$.

To this end, we first rewrite the divisor as follows:
$$
\begin{aligned}
B_1 + 3C_1 &\sim B_1 + f^*F_0 + f^*F_\infty + C_1 \\
&\sim B_1 + C_0 + E_{0,c_1} + C_{\infty} + E_{\infty,c_1} + E_{\infty,c_2} + C_1 \\
&\sim f^*(H_1 + F_1) + C_0 + C_{\infty} + E_{\infty,c_2}.
\end{aligned}
$$
The divisor $H_1 + F_1$ on $\mathbb{P}^1 \times C$ is ample.
It follows that any irreducible curve on $T$ having non-positive intersection number with $B_1 + 3C_1$ must be either $f$-exceptional or contained in the support of $C_0 + C_{\infty} + E_{\infty,c_2}$.
To verify the nefness and identify the contracted curves, we compute the intersection numbers for all such candidates. Since
$$
(B_1 + 3C_1) \cdot C_0 = (B_1 + 3C_1) \cdot C_{\infty} = (B_1 + 3C_1) \cdot E_{\infty,c_2} = 0,
$$
and
$$
(B_1 + 3C_1) \cdot E_{0,c_1} = (B_1 + 3C_1) \cdot E_{\infty,c_1} = 1,
$$
we conclude that $B_1 + 3C_1$ is nef, and the irreducible curves whose intersection number with $B_1 + 3C_1$ equals $0$ are precisely $C_0$, $C_{\infty}$, and $E_{\infty,c_2}$.
Here, we recall that 
$$
K_{\mathbb{P}^1 \times C} \sim p_1^*K_{\mathbb{P}^1} + p_2^*K_C \sim -2F_1.$$
Then, the canonical divisor $K_T$ satisfies
\begin{align*}
K_T &\sim -2f^*F_1 + E_{0,c_1} + E_{\infty,c_1} + E_{\infty,c_2}\\ 
&\sim -2f^*F_1 + f^*F_0 + f^*F_{\infty} - C_0 - C_{\infty}\\
&\sim -C_0 - C_{\infty}.
\end{align*}
This implies that
$$
(B_1 + 3C_1) - (K_T + C_0 + C_{\infty}) \sim B_1 + 3C_1,
$$
which is nef and big.
In addition, the restriction
$$
(B_1 + 3C_1)|_{\text{Nklt}(T, C_0 + C_{\infty})} = (B_1 + 3C_1)|_{\text{Supp}(C_0 + C_{\infty})} = 0
$$
is trivially semi-ample.
Therefore, by Theorem~\ref{thm: basepoint-free}, $B_1 + 3C_1$ is semi-ample.
Consequently, the divisor $B_1 + 3C_1$ on $T$ is big and semi-ample, and the corresponding birational morphism $g \colon T \to S$ contracts only $C_0$, $C_{\infty}$, and $E_{\infty,c_2}$.
The resulting surface $S$ is projective.
\end{step}

\begin{step}
Our goal in this step is to construct a global birational contraction $h \colon T \to Z$ via gluing, which naturally factors the morphism $g \colon T \to S$.

Let $U_0$ be the open subset of $T$ defined by
\[
U_0 \coloneqq (p_1 \circ f)^{-1}(\mathbb{P}^1 \setminus \{\infty\}) = T \setminus (C_\infty \cup E_{\infty, c_1} \cup E_{\infty, c_2}),
\]
and let $\varphi_0 \colon U_0 \to \mathbb{P}^1 \setminus \{\infty\}$ be the restriction of $p_1 \circ f$ to $U_0$. A direct calculation shows that the restriction
\[
(K_T + C_0 + B_1)|_{U_0} \sim (-2C_1 + E_{0, c_1} + C_0 + B_1)|_{U_0} \sim (f^*(-2F_1 + H_1) + C_0)|_{U_0}
\]
is $\varphi_0$-nef and $\varphi_0$-big. Thus, Theorem~\ref{thm: abundance} applied to the pair $(U_0, (C_0 + B_1)|_{U_0})$ implies that $(K_T + C_0 + B_1)|_{U_0}$ is $\varphi_0$-semi-ample. This induces a relative birational contraction morphism $h_0 \colon U_0 \to Z_0$ over $\mathbb{P}^1 \setminus \{\infty\}$. Since 
\[
(K_T + C_0 + B_1)|_{U_0} \cdot E_{0, c_1}|_{U_0} = 1 \quad \text{and} \quad (K_T + C_0 + B_1)|_{U_0} \cdot C_0|_{U_0} = 0,
\]
the morphism $h_0$ contracts precisely $C_0$.
Similarly, let $U_\infty$ be the open subset of $T$ defined by
\[
U_\infty \coloneqq (p_1 \circ f)^{-1}(\mathbb{P}^1 \setminus \{0\}) = T \setminus (C_0 \cup E_{0, c_1}),
\]
and let $\varphi_\infty \colon U_\infty \to \mathbb{P}^1 \setminus \{0\}$ be the restriction of $p_1 \circ f$ to $U_\infty$. The restriction
\begin{align*}
(K_T + C_\infty + B_1 + B_2)|_{U_\infty} &\sim (-2C_1 + E_{\infty, c_1}+E_{\infty, c_2} + C_\infty + B_1 + B_2)|_{U_\infty}\\
&\sim (f^*(-2F_1 + H_1 + H_2) + C_\infty)|_{U_\infty}
\end{align*}
is $\varphi_\infty$-nef and $\varphi_\infty$-big. By the relative abundance theorem for the pair $(U_\infty, (C_\infty + B_1 + B_2)|_{U_\infty})$, the divisor $(K_T + C_\infty + B_1 + B_2)|_{U_\infty}$ is $\varphi_\infty$-semi-ample, yielding a relative birational contraction morphism $h_\infty \colon U_\infty \to Z_\infty$ over $\mathbb{P}^1 \setminus \{0\}$. Since
\[
(K_T + C_\infty + B_1 + B_2)|_{U_\infty} \cdot E_{\infty, c_1}|_{U_\infty} = (K_T + C_\infty + B_1 + B_2)|_{U_\infty} \cdot E_{\infty, c_2}|_{U_\infty} = 1
\]
and $(K_T + C_\infty + B_1 +B_2)|_{U_\infty} \cdot C_\infty|_{U_\infty} = 0$, the morphism $h_\infty$ contracts precisely $C_\infty$.
By gluing $h_0$ and $h_\infty$ together, we obtain a contraction morphism $h \colon T \to Z$ over $\mathbb{P}^1$ which contracts only $C_0$ and $C_\infty$, and a normal complete surface $Z$ (cf.~\cite[II, Corollary 4.8]{Har77}).
Finally, since every curve contracted by $h\colon T\to Z$ is also contracted by $g\colon T\to S$, the morphism $g$ factors through $h$. This induces a birational morphism $\phi \colon Z \to S$ such that $g = \phi \circ h$.
\end{step}

\begin{step} \label{step: Z is non-proj}
We show that $Z$ is a complete log canonical surface with $K_Z\sim 0$, and that it is non-projective if we further arrange the choice of $c_1, c_2\in C$ so that $\mathcal O_C(c_1-c_2)\in\operatorname{Pic}^0(C)$ is a non-torsion element. The existence of such a choice is guaranteed by our assumption on $k$ and Fact~\ref{fact: non-torsion}.

The linear equivalence $K_T + C_0 + C_\infty \sim 0$ implies $K_Z \sim 0$, and $K_T = h^*(K_Z) - C_0 - C_\infty$ implies that $Z$ is log canonical.
To show that $Z$ is non-projective, we fix a choice of $c_1$ and $c_2$ such that $\mathcal O_C(c_1-c_2) \in \operatorname{Pic}^0(C)$ is a non-torsion element, and suppose for contradiction that there exists an ample divisor on $Z$. 
Then, there exists a very ample effective divisor $D$ on $Z$ whose support avoids the points $h(C_0)$ and $h(C_\infty)$ on $Z$.
Since $h$ does not contract $E_{0, c_1}$, $E_{\infty, c_1}$ or $E_{\infty, c_2}$, we have $h^*D\cdot E_{0, c_1}>0$, $h^*D\cdot E_{\infty, c_1}>0$, and $h^*D\cdot E_{\infty, c_2}>0$.
Let $\tilde{D}:=f_*h^*D$.
Here, we can write $f^*\tilde{D}=h^*D+n_1E_{0, c_1}+n_2E_{\infty, c_1}+n_3E_{\infty, c_2}$ for some integers $n_1, n_2, n_3$.
Since the $f$-exceptional divisors are disjoint from each other, we have the equation $0=f^*\tilde{D}\cdot E_{0, c_1}=h^*D\cdot E_{0, c_1}-n_1$, which implies $n_1>0$. Similarly, we obtain $n_2, n_3>0$.
By our choice of $D$, the effective divisor $h^*D$ does not contain $C_0$ or $C_\infty$ in its support, which implies that $\tilde{D}$ does not contain the fibers $F_0$ or $F_\infty$.
Hence the intersections satisfy $\tilde{D} \cap F_0 = \{(0, c_1)\}$ and $\tilde{D} \cap F_\infty = \{(\infty, c_1), (\infty, c_2)\}$ set-theoretically.
Since $F_0$ and $F_\infty$ are isomorphic via $f$ to the curve $C_0$ and $C_\infty$ respectively, identifying $F_0$ and $F_\infty$ with the curve $C$, the scheme-theoretic restrictions yield the relations $\tilde{D}|_{F_0} = n_1 c_1$ and $\tilde{D}|_{F_\infty} = n_2 c_1 + n_3 c_2$ as divisors on $C$.
We note that $\mathcal O_{\mathbb{P}^1\times C}(\tilde D)\cong p^*_1\mathcal L_1\otimes p^*_2\mathcal L_2$ for some invertible sheaves $\mathcal{L}_1$ on $\mathbb{P}^1$ and $\mathcal{L}_2$ on $C$.
This implies the linear equivalence $\tilde{D}|_{F_0} \sim \tilde{D}|_{F_\infty}$ as divisors on $C$. On the other hand, by identifying both $F_0$ and $F_\infty$ with the curve $C$, the scheme-theoretic restrictions yield the relations $\tilde{D}|_{F_0} = n_1 c_1$ and $\tilde{D}|_{F_\infty} = n_2 c_1 + n_3 c_2$ on $C$. 
Therefore, we obtain $(n_1 - n_2)c_1 - n_3c_2 \sim 0$.
By comparing the degrees of these two restrictions, we have $n_1 = n_2 + n_3$, which yields $n_3(c_1 - c_2) \sim 0$.
Since $\mathcal O_C(c_1 - c_2)$ is a non-torsion element, we must have $n_3 = 0$. This contradicts the fact that $n_3 > 0$. Consequently, $Z$ is non-projective.
\end{step}

From now on, we assume that the points $c_1$ and $c_2$ are chosen so that $Z$ is non-projective.

\begin{step} \label{step: cyclic cover}
We construct a finite cyclic cover $\tilde{Z} \to Z$ branched along a smooth divisor pulled back from $S$, and show that $\tilde{Z}$ is the desired surface by exploiting the non-projectivity of $Z$.

Let $J \subset S$ be the finite set consisting of the singular points of $S$. We choose a very ample Cartier divisor $A$ on the projective surface $S$.
For a positive integer $m$, there exists a member $A' \in |mA|$ that also avoids $J$ and has a smooth support.
We define $A'' \coloneqq \phi^*A'$.
By construction, $A''$ is an effective Cartier divisor on $Z$ that is naturally identified with $A'$ via $\phi$. In particular, $A''$ is smooth and avoids the singular locus of $Z$. 
Next, we consider the cyclic cover of degree $m$ defined by
\[
\mu \colon \tilde{Z} \coloneqq \operatorname{Spec}_Z \left( \bigoplus_{l=0}^{m-1} \mathcal{O}_Z(-l\phi^*A) \right) \to Z,
\]
which is ramified along $A''$ (cf.~\cite[Definition 2.50]{KM98}). Here, we set $m = 2$ if the base field $k$ has characteristic zero; in positive characteristic, we choose $m$ to be coprime to $\operatorname{char}(k)$.
By the ramification formula for finite covers, we obtain the equality of $\mathbb{Q}$-divisors:
\[
K_{\tilde{Z}} = \mu^* \left( K_Z + \frac{m-1}{m}A'' \right).
\]
Since $Z$ is log canonical and $A''$ is a smooth divisor avoiding the singular points of $Z$, the pair $(Z, \frac{m-1}{m}A'')$ is log canonical. Consequently, the finiteness of $\mu$ together with \cite[Proposition 5.20]{KM98} implies that $\tilde{Z}$ is a log canonical surface.
If there were an ample invertible sheaf $\mathcal{L}$ on $\tilde{Z}$, its image $\operatorname{Norm}_\mu(\mathcal{L})$ via the norm map associated with the cyclic cover $\mu \colon \tilde{Z} \to Z$ would yield an ample invertible sheaf on $Z$ (cf.~\cite[Tag 0BD4]{stacks-project}).
This contradicts the fact that $Z$ is non-projective.
Hence, $\tilde{Z}$ is a non-projective surface.
Since $K_Z \sim 0$, the ramification formula yields $\mathcal{O}_{\tilde{Z}}(K_{\tilde{Z}}) \cong \mu^*\phi^* \mathcal{O}_S((m-1)A)$.
Since the divisor $A$ is very ample on the projective surface $S$, the canonical divisor $K_{\tilde{Z}}$ is semi-ample.
Furthermore, the Kodaira dimension satisfies
$$
\kappa(\tilde{Z}, K_{\tilde{Z}}) = \kappa(S, \mathcal{O}_{S}((m-1)A)) = 2.
$$
Therefore, $\tilde{Z}$ is a normal non-projective complete log canonical surface with a semi-ample canonical divisor $K_{\tilde{Z}}$ and Kodaira dimension $\kappa(\tilde{Z}, K_{\tilde{Z}}) = 2$.
\end{step}

\subsection{\texorpdfstring{$\kappa=0, 1$}{kappa=0, 1}}
As noted before, examples with $\kappa=0$ can be essentially found in \cite[Section 12]{fujino-nagoya}, \cite[Aside 3.46]{Kol07}, and \cite[Section 2.5]{Sch99}. 
Here, for convenience, we construct examples with $\kappa=0$ and $1$ by slightly modifying the case $\kappa=2$ above.
First, Step~\ref{step: Z is non-proj} directly implies that the surface $Z$ itself is the desired example with $\kappa(Z, K_Z)=0$.
Next, we construct an example with $\kappa=1$. In the construction of Step~\ref{step: cyclic cover}, we use a general fiber of the morphism $Z \to \mathbb{P}^1$ instead of the smooth divisor pulled back from the ample divisor $A$ on $S$. Then, we consider a cyclic cover ramified along $m$ general fibers. For the resulting surface $\tilde{Z}$, the invertible sheaf $\mathcal{O}_{\tilde{Z}}(K_{\tilde{Z}})$ is isomorphic to the pull-back of $\mathcal{O}_{\mathbb{P}^1}(m-1)$. This implies that $K_{\tilde{Z}}$ is semi-ample and satisfies $\kappa(\tilde{Z}, K_{\tilde{Z}})=1$.

\section{Proof of Theorem 1.2}

The first author proved a theorem similar to Theorem~\ref{thm1.2} in \cite[Theorem~1.3]{fujino-nagoya}. Since we could not find an explicit proof of Theorem~\ref{thm1.2} in the literature, we include a proof here for the reader's convenience. We use a classical argument due to Goodman (see 
\cite{goodman}) to prove the projectivity of $X$. Note that the argument used in the proof of \cite[Theorem~1.3]{fujino-nagoya} does not directly apply to our setting, since a sufficiently developed minimal model theory for two-dimensional algebraic spaces is not currently available.

\begin{proof}[Proof of Theorem~\ref{thm1.2}]
Let $f\colon Y\to X$ be the minimal resolution of singularities.
Then $Y$ is a smooth projective surface with
$\kappa(Y,K_Y)=-\infty$. 
Since $X$ is a normal surface, it is Cohen--Macaulay.
Moreover, the assumption $\kappa(X,K_X)=-\infty$ implies that
\[
H^0(X,\mathcal O_X(K_X))=0.
\]
Hence Serre duality (see, for example, \cite[Tag 0FW0]{stacks-project}) yields
\[
H^2(X,\mathcal O_X)=0.
\]
Therefore, the Leray spectral sequence gives the following short exact sequence:
\begin{equation}\label{eq1}
0
\to H^1(X,\mathcal O_X)
\to H^1(Y,\mathcal O_Y)
\to H^0(X,R^1f_*\mathcal O_Y)
\to 0.
\end{equation}
\setcounter{step}{0}

\begin{step}\label{step1}
We show that $X$ has at most one non-rational singular point.

Assume that $X$ has a non-rational singular point $P\in X$.
If $Y$ is rational, namely birational to $\mathbb P^2$,
then
\[
H^1(Y,\mathcal O_Y)=0.
\]
By \eqref{eq1}, this implies that
\[
R^1f_*\mathcal O_Y=0,
\]
and hence all singularities of $X$ are rational, a contradiction.
Therefore, $Y$ is irrational. Running the minimal model program for $Y$, we may assume that
$Y$ is obtained from a $\mathbb P^1$-bundle
\(
\mathbb P_C(\mathcal E)\to C
\)
over a smooth projective curve $C$ with $g(C)\ge 1$
by a sequence of blow-ups. 
By the classification of two-dimensional log canonical singularities,
the exceptional divisor $f^{-1}(P)$ cannot be mapped to a point of $C$.
Indeed, every non-rational two-dimensional log canonical singularity is either a simple elliptic singularity or a cusp singularity.
Hence $f^{-1}(P)$ dominates $C$.
It follows that $C$ is either an elliptic curve or $\mathbb P^1$. Since $P$ is non-rational, we have
\[
R^1f_*\mathcal O_Y\ne 0.
\]  
Thus \eqref{eq1} implies that
\[
\dim H^1(Y,\mathcal O_Y)\ne 0.
\]
Therefore $g(C)=1$, and consequently
\[
\dim H^1(Y,\mathcal O_Y)=1.
\]
Again by \eqref{eq1}, the sheaf $R^1f_*\mathcal O_Y$
is supported only at $P$.
Hence $X$ has at most one non-rational singular point.
\end{step}

\begin{step}[Goodman]
We prove that $X$ is projective.

By Step~\ref{step1}, there exists an affine open subset
$U\subset X$
such that every point of $X\setminus U$ is a rational singularity. 
In particular, every irreducible component of $X\setminus U$ is a $\mathbb Q$-Cartier Weil divisor. 
Then, by the same argument as in the proof of
\cite[Theorem~2]{goodman},
one can construct an effective Cartier divisor $D$ on $X$
such that
\[
\operatorname{Supp} D = X\setminus U
\]
and $D$ is ample.
Therefore $X$ is projective.
\end{step}

This completes the proof.
\end{proof}

Step~\ref{step1} in the proof of Theorem~\ref{thm1.2}
establishes the following corollary.

\begin{cor}\label{cor2.2}
Let $X$ be a complete log canonical algebraic surface over an algebraically closed field.
If $\kappa(X,K_X)=-\infty$, then $X$ has at most one non-rational singular point.
\end{cor}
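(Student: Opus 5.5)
The plan is to recycle the argument of Step~\ref{step1} in the proof of Theorem~\ref{thm1.2}, which uses only the hypothesis $\kappa(X,K_X)=-\infty$ and does not use projectivity of $X$. We take the minimal resolution $f\colon Y\to X$. Since $X$ is a complete normal surface, $Y$ is a smooth complete surface, hence projective, and $\kappa(Y,K_Y)=-\infty$ because $f_*$ of a section of $mK_Y$ gives a section of $mK_X$. The hypothesis gives $H^0(X,\mathcal O_X(K_X))=0$. Since $X$ is Cohen--Macaulay, Serre duality on the proper scheme $X$ then gives $H^2(X,\mathcal O_X)=0$. The five-term sequence of the Leray spectral sequence for $f$ therefore yields the short exact sequence \eqref{eq1}, where $R^1f_*\mathcal O_Y$ is a skyscraper sheaf supported exactly at the non-rational singular points.

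Next we assume there is a non-rational singular point $P$. If $Y$ is rational then $H^1(Y,\mathcal O_Y)=0$, and \eqref{eq1} forces $R^1f_*\mathcal O_Y=0$, a contradiction. So $Y$ is birationally ruled over a curve $C$ of genus $g(C)\ge1$, and after running the minimal model program $Y$ is an iterated blow-up of a $\mathbb P^1$-bundle $\pi\colon\mathbb P_C(\mathcal E)\to C$. We then use the classification of non-rational log canonical surface singularities: they are simple elliptic or cusp singularities, so $f^{-1}(P)$ is either a smooth elliptic curve or a cycle of rational curves. Every curve contained in a fiber of $Y\to C$ is rational, so a smooth elliptic exceptional curve cannot lie in a fiber. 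A cycle of rational curves cannot lie in a fiber either, since fibers of a blown-up ruled surface are trees of smooth rational curves. Hence $f^{-1}(P)$ dominates $C$. An elliptic curve or a rational curve dominating $C$ forces $g(C)\le 1$, so $g(C)=1$ and $h^1(Y,\mathcal O_Y)=1$.

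Finally, by \eqref{eq1} we get $h^0(X,R^1f_*\mathcal O_Y)\le h^1(Y,\mathcal O_Y)=1$. Each non-rational singular point contributes a nonzero stalk, so there is at most one such point.

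The main obstacle I expect is justifying that the exceptional locus of a non-rational log canonical point cannot sit inside a fiber of the ruling. For the cusp case this uses the fact that fibers of an iterated blow-up of a $\mathbb P^1$-bundle have dual graph a tree, so they contain no cycle. I would also double-check that Serre duality applies on a complete but possibly non-projective $X$; the cited Stacks Project tag handles proper Cohen--Macaulay schemes, so this should be fine.
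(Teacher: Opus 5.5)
Your proposal is correct and follows essentially the same route as the paper's proof, which is Step~1 of the proof of Theorem~\ref{thm1.2}: the minimal resolution, Serre duality to get $H^2(X,\mathcal O_X)=0$, the Leray sequence \eqref{eq1}, ruling out rational $Y$, and the classification of non-rational log canonical singularities, which forces $g(C)=1$ and hence $h^0(X,R^1f_*\mathcal O_Y)\le 1$. Your argument that $f^{-1}(P)$ cannot lie in a fiber fills in the paper's one-line appeal to the classification: fibers are trees of smooth rational curves, which excludes elliptic curves, nodal curves and cycles. It also shows that the cusp case cannot occur at all, since no rational curve dominates a curve of positive genus.
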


We conclude this paper with a remark on Nagata's example.

\begin{rem}
In \cite[Example 1]{nagata}, Nagata constructed a non-projective complete surface $S^*$. By construction, $S^*$ is birational to $\mathbb{P}^2$ and is not log canonical. Furthermore,
$H^0(S^*,\mathcal O_{S^*}(K_{S^*}))\neq 0$, 
which can be verified by an application of the Leray spectral sequence as in the proof of Theorem~\ref{thm1.2}.
\end{rem}

\bibliographystyle{amsalpha}
\bibliography{ref}

@article {Sch99,
    AUTHOR = {Schr\"oer, Stefan},
     TITLE = {On non-projective normal surfaces},
   JOURNAL = {Manuscripta Math.},
  FJOURNAL = {Manuscripta Mathematica},
    VOLUME = {100},
      YEAR = {1999},
    NUMBER = {3},
     PAGES = {317--321},
      ISSN = {0025-2611,1432-1785},
   MRCLASS = {14J99 (14C20)},
  MRNUMBER = {1726231},
MRREVIEWER = {Kristian\ Ranestad},
       DOI = {10.1007/s002290050203},
       URL = {https://doi-org.kyoto-u.idm.oclc.org/10.1007/s002290050203},
}

@book {Kol07,
    AUTHOR = {Koll\'ar, J\'anos},
     TITLE = {Lectures on resolution of singularities},
    SERIES = {Annals of Mathematics Studies},
    VOLUME = {166},
 PUBLISHER = {Princeton University Press, Princeton, NJ},
      YEAR = {2007},
     PAGES = {vi+208},
      ISBN = {978-0-691-12923-5; 0-691-12923-1},
   MRCLASS = {14E15 (32S45)},
  MRNUMBER = {2289519},
MRREVIEWER = {Dan\ Abramovich},
}

@book {KM98,
    AUTHOR = {Koll\'ar, J\'anos and Mori, Shigefumi},
     TITLE = {Birational geometry of algebraic varieties},
    SERIES = {Cambridge Tracts in Mathematics},
    VOLUME = {134},
      NOTE = {With the collaboration of C. H. Clemens and A. Corti,
              Translated from the 1998 Japanese original},
 PUBLISHER = {Cambridge University Press, Cambridge},
      YEAR = {1998},
     PAGES = {viii+254},
      ISBN = {0-521-63277-3},
   MRCLASS = {14E30},
  MRNUMBER = {1658959},
MRREVIEWER = {Mark\ Gross},
       DOI = {10.1017/CBO9780511662560},
       URL = {https://doi-org.kyoto-u.idm.oclc.org/10.1017/CBO9780511662560},
}

@book {Har77,
    AUTHOR = {Hartshorne, Robin},
     TITLE = {Algebraic geometry},
    SERIES = {Graduate Texts in Mathematics},
    VOLUME = {52},
 PUBLISHER = {Springer-Verlag, New York-Heidelberg},
      YEAR = {1977},
     PAGES = {xvi+496},
      ISBN = {0-387-90244-9},
   MRCLASS = {14-01},
  MRNUMBER = {463157},
MRREVIEWER = {Robert\ Speiser},
}

@article {fujino-surface,
    AUTHOR = {Fujino, Osamu},
     TITLE = {Minimal model theory for log surfaces},
   JOURNAL = {Publ. Res. Inst. Math. Sci.},
  FJOURNAL = {Publications of the Research Institute for Mathematical
              Sciences},
    VOLUME = {48},
      YEAR = {2012},
    NUMBER = {2},
     PAGES = {339--371},
      ISSN = {0034-5318,1663-4926},
   MRCLASS = {14E30},
  MRNUMBER = {2928144},
MRREVIEWER = {Paul\ A.\ Hacking},
       DOI = {10.2977/PRIMS/71},
       URL = {https://doi-org.kyoto-u.idm.oclc.org/10.2977/PRIMS/71},
}

@article {fujino-nagoya,
    AUTHOR = {Fujino, Osamu},
     TITLE = {Minimal model theory for log surfaces in {F}ujiki's class
              {${\mathcal C}$}},
   JOURNAL = {Nagoya Math. J.},
  FJOURNAL = {Nagoya Mathematical Journal},
    VOLUME = {244},
      YEAR = {2021},
     PAGES = {256--282},
      ISSN = {0027-7630,2152-6842},
   MRCLASS = {14E30 (32J27)},
  MRNUMBER = {4335910},
MRREVIEWER = {Omprokash\ Das},
       DOI = {10.1017/nmj.2020.14},
       URL = {https://doi-org.kyoto-u.idm.oclc.org/10.1017/nmj.2020.14},
}

@article {goodman,
    AUTHOR = {Goodman, Jacob Eli},
     TITLE = {Affine open subsets of algebraic varieties and ample divisors},
   JOURNAL = {Ann. of Math. (2)},
  FJOURNAL = {Annals of Mathematics. Second Series},
    VOLUME = {89},
      YEAR = {1969},
     PAGES = {160--183},
      ISSN = {0003-486X},
   MRCLASS = {14.55},
  MRNUMBER = {242843},
MRREVIEWER = {D.\ Gieseker},
       DOI = {10.2307/1970814},
       URL = {https://doi-org.kyoto-u.idm.oclc.org/10.2307/1970814},
}

@article {tanaka,
    AUTHOR = {Tanaka, Hiromu},
     TITLE = {Minimal models and abundance for positive characteristic log
              surfaces},
   JOURNAL = {Nagoya Math. J.},
  FJOURNAL = {Nagoya Mathematical Journal},
    VOLUME = {216},
      YEAR = {2014},
     PAGES = {1--70},
      ISSN = {0027-7630,2152-6842},
   MRCLASS = {14E30},
  MRNUMBER = {3319838},
MRREVIEWER = {Paul\ A.\ Hacking},
       DOI = {10.1215/00277630-2801646},
       URL = {https://doi-org.kyoto-u.idm.oclc.org/10.1215/00277630-2801646},
}

@misc{Mor26,
      title={A note on {$\mathbb{Q}$}-{G}orenstein surfaces}, 
      author={Nao Moriyama},
      year={2026},
      eprint={2602.00732},
      archivePrefix={arXiv},
      primaryClass={math.AG},
      url={https://arxiv.org/abs/2602.00732}, 
      note={\href{https://arxiv.org/abs/2602.00732}{arXiv:2602.00732}},
}

@misc{stacks-project,
  author       = {The {Stacks Project Authors}},
  title        = {The {Stacks Project}},
  howpublished = {\url{https://stacks.math.columbia.edu}},
  year         = {2026},
}

@article {Fuj21,
    AUTHOR = {Fujino, Osamu},
     TITLE = {On minimal model theory for algebraic log surfaces},
   JOURNAL = {Taiwanese J. Math.},
  FJOURNAL = {Taiwanese Journal of Mathematics},
    VOLUME = {25},
      YEAR = {2021},
    NUMBER = {3},
     PAGES = {477--489},
      ISSN = {1027-5487,2224-6851},
   MRCLASS = {14E30},
  MRNUMBER = {4298910},
MRREVIEWER = {Kalyan\ Banerjee},
       DOI = {10.11650/tjm/210102},
       URL = {https://doi-org.kyoto-u.idm.oclc.org/10.11650/tjm/210102},
}

@article {nagata,
    AUTHOR = {Nagata, Masayoshi},
     TITLE = {Existence theorems for nonprojective complete algebraic
              varieties},
   JOURNAL = {Illinois J. Math.},
  FJOURNAL = {Illinois Journal of Mathematics},
    VOLUME = {2},
      YEAR = {1958},
     PAGES = {490--498},
      ISSN = {0019-2082},
   MRCLASS = {14.00},
  MRNUMBER = {97406},
MRREVIEWER = {M.\ Rosenlicht},
       URL = {http://projecteuclid.org.kyoto-u.idm.oclc.org/euclid.ijm/1255454111},
}

\end{document}